\documentclass[12pt]{article}
\usepackage[utf8]{inputenc}
\usepackage[IL2]{fontenc}
\usepackage[czech]{babel}
\usepackage{amssymb}
\usepackage{amsmath} 
\usepackage{amsthm} 
\usepackage{amssymb} 
\usepackage{commath}
\usepackage{amsfonts} 
\usepackage{enumerate}
\usepackage{subcaption}

\newtheorem{theorem}{Theorem}%[section]

\newtheorem{lemma}[theorem]{Lemma}%[section]
%[section]
\newtheorem{remark}[theorem]{Remark}%[section]
\theoremstyle{definition}
\newtheorem{defi}{Definition}

\usepackage{tikz}
\usetikzlibrary{calc,decorations}% for positioning nodes, decorating arrows
\pgfdeclaredecoration{arrows}{draw}{
\state{draw}[width = \pgfdecoratedinputsegmentlength]{%
  \path [every arrow subpath/.try] \pgfextra{%
    \pgfpathmoveto{\pgfpointdecoratedinputsegmentfirst}%
    \pgfpathlineto{\pgfpointdecoratedinputsegmentlast}%
   };
}}
\tikzset{% define styles for each node type and edges
                 base/.style = { circle, draw },
               filled/.style = { base, fill = black!50 },
  every arrow subpath/.style = { ->, draw, thick }
}

\begin{document}
\renewcommand{\proofname}{Proof}
\renewcommand{\figurename}{Figure}

\title{\textit{Remarks on definitions of periodic points for nonautonomous dynamical system}}
\date{December 9, 2018}
\author{Vojt\v{e}ch Pravec}

\maketitle
\noindent\textbf{Abstract}
\\

\noindent Let $(X,f_{1,\infty})$ be a nonautonomous dynamical system. In this paper we summarize known definitions of periodic points for general nonautonomous dynamical systems and propose a new definition of asymptotic periodicity. This definition is not only very natural but also resistant to changes of a beginning of the sequence generating the nonautonomous system. We show the relations among these definitions and discuss their properties. We prove that for pointwise convergent nonautonomous systems topological transitivity together with dense set of asymptotically periodic points imply sensitivity. We also show that even for uniformly convergent systems the nonautonomous analog of Sharkovsky’s Theorem is not valid for most definitions of periodic points.
\\

\noindent \textit{Keywords}: nonautonomous system, periodic point, Devaney chaos, Sharkovsky’s ordering
\\

\noindent \textit{AMS Subject Classification}: 37B55, 37C25, 54H20

\section{Introduction}
Denote by $\mathbb{N}$ the set of positive integers. Let $X$ be a compact metric space and $f_{1,\infty}:=\{f_i\}_{i=1}^\infty$ a sequence of continuous maps $f_i:X\rightarrow X$. By a \textit{nonautonomous dynamical system} (NDS, for short) we mean a pair $(X,f_{1,\infty})$. Denote by $\mathcal{F}(X)$ the class of such systems. For $(X,f_{1,\infty})$ and any $i,n\in \mathbb{N}$ put $f_i^0=id_X$ and $f_i^n=f_{i+(n-1)}\circ \cdots \circ f_{i+1}\circ f_i$, the $n$-th iteration of $f_i$. The \textit{trajectory} of a point $x\in X$ is the sequence $\{f_1^n(x)\}_{n=0}^\infty$ and the \textit{orbit} of a point $x\in X$ is the set $\mathcal{O}(x,f_{1,\infty})=\{x,f_1^1(x),f_1^2(x),\dots,f_1^n(x),\dots\}$. Note that a “classical” autonomous dynamical system $(X,f)$ is a special case of NDS with $f_n=f$ for any $n\in\mathbb{N}$. We assume that $f_n$ is surjective for any $n\in\mathbb{N}$ since we want to avoid some pathological examples (for instance if $f_1$ is constant then the whole dynamics of $(X,f_{,\infty})$ is shrinked to a single trajectory).

In a part of this paper we study properties related to periodicity for a special class $\mathcal{F}_0(X)$ of uniformly convergent NDS, i.e., such that the sequence $f_{1,\infty}$ converges uniformly to a continuous map $f$. Note that such systems are currently studied intensively - see, e.g. \cite{12}-\cite{9}, \cite{3}-\cite{11}.

The notion of a periodic point in the case of autonomous system $(X,f)$  (\textit{x is periodic with period} $n$ if there is $n\in\mathbb{N}$ such that $f^n(x)=x$) is very natural and intuitive. But if we consider a NDS then it is more difficult to decide what a good definiton of a periodic point should be. In recent years, many papers appeared, where different definitions of periodicity for NDS have been introduced. In the next subsection we will give a survey of them and propose a new definition. Note that all definitions of periodicity can be stated for general NDS without any assumptions.

\subsection{Definitions of periodic points}
Let $X$ be a metric space and $f_{1,\infty}$ a sequence of continuous maps of $X$
\begin{defi}\label{def1} (Cánovas, 2011 \cite{2})

A point $x\in X$ is {\it pseudo-periodic} of $f_{1,\infty}$ if there exists $r\in \mathbb{N}$ and points $x_1=x,x_2,\dots,x_r$ such that for any $\varepsilon >0$ there is $N\in \mathbb{N}$ such that $d(f_{nr+i}(x_i),x_{i+1})<\varepsilon$ for any $n\geq N, 1\leq i\leq r,$ and $d(f_{nr}(x_r),x_1)<\varepsilon$ for $n\geq N$. The smallest natural number $r$ satisfying the above conditions is called the period of $x$. Denote by $PS(f_{1,\infty})$ the set of pseudo-periodic points of $f_{1,\infty}$.
\end{defi}
 \begin{defi}\label{def2}(Shi and Chen, 2009 \cite{5})

A point $x\in X$ is called \textit{$r$-periodic} if $f_1^{n+r}(x)=f_1^n(x),n\geq 0$.
\end{defi}
 \begin{defi}\label{def3}(Sánchez, Sanchiz and Villanueva, 2017 \cite{4})

A point $x\in X$ is \textit{periodic} if $f_1^n(x)=x$ for some positive integer $n$.
\end{defi}
 \begin{defi}\label{def4}(Miralles, Murillo-Arcila and Sanchiz, 2018 \cite{3})

A point $x\in X$ is \textit{$r$-periodic} if $f_1^{rn}(x)=x$ for  any $n\in \mathbb{N}$.
 \end{defi}

All of the above mentioned definitions are generalizations of periodicity for ADS, i.e., if a NDS is such that $f_n=f$ for any $n\in\mathbb{N}$, then periodicity in the sense of Definitions \ref{def1}-\ref{def4} coincides with the standard periodicity of ADS.

But if we study dynamics of some system (and it does not matter whether it is autonomous or nonautonomous), we are in fact interested in a long term behavior of trajectories of points. Therefore, in the case of NDS, changing or deleting of finitely many functions should not affect possible periodicity of a given trajectory. Note that Definitons \ref{def1}-\ref{def4} do not satisfy this natural requirement. For example if we consider a NDS such that $f_1=id_X$ and $f_2=f_3=\dots=\tau$, where $\tau$ is the {\it tent map} ($\tau:I\rightarrow I, \tau(x)=1-\abs{1-2x}$), then every point is periodic in the sense of Definiton \ref{def3}, despite the fact that there are points with dense orbits. Hence Definition \ref{def3} seems to be too benevolent. On the other hand  Definitions~\ref{def2} and \ref{def4} are too strict since they do not allow even a small change in the trajectory of a periodic point if we consider the previous example, then the only periodic points are fixed points.

Definition \ref{def1} works with pseudo-periodicity which is more suitable for a NDS than strict periodicity. But it defines the term “a point is pseudo-periodic” and does not take into account the trajectory of the point itself. One can easily construct a NDS with points whose trajectories do not behave periodically but they are periodic in the sense of Definition \ref{def1} (e.g., see the proof of Theorem \ref{relations}, part iii)).

Since none of the above mentioned definitions does not fulfill our conception and requirements for periodicity in a NDS, we propose a new definition.

\begin{defi}\label{def5} A point $x\in X$ is {\it asymptotically periodic} with the cycle $x_1,x_2\dots,x_r$ if there exist $x_1,x_2,\dots,x_r\in X$ such that for any $\varepsilon>0$ there is $n_0>0$ such that 
$d(f_1^{nr+i}(x),x_{i+1})<\varepsilon$ for $n\geq n_0$ and $0\leq i<r$.
Denote by $AP(f_{1,\infty})$ the set of asymptotically periodic points of $f_{1,\infty}$.
\end{defi}
Hence, a point is periodic in the sense of this definition if it eventually approaches some cycle. This definition is in fact closer to \textit{eventual periodicity} than \textit{periodicity} in the case of ADS, but in fact, this approach is for NDS more natural. Note also that, if a point is asymptotically periodic then its omega limit set $\omega_{f_{1,\infty}}(x)$ is finite but in general the converse implication is not true.
%contrary to ADS, NDS on compact metric spaces need not have periodic orbits at all.

The following theorem summarizes the relations among the definitions mentioned above.
\begin{theorem}\label{relations}

 The relations among Definitions 1-5 are as follows:
\begin{figure}[h!]
\begin{center}
\begin{subfigure}[b]{0.4\textwidth}
\setlength{\unitlength}{4.5cm}
\begin{tikzpicture}
  % draw all the nodes first
  \draw node[base] (1)  at (1,1)								{1}
 		 node[base] (2) at(2,2)                             {2}
 		  node[base] (3) at(4,-0) 							{3}
 		   node[base] (4) at(3,1) 							{4}
 		    node[base] (5) at(2,-0) 							{5}

  ;
  
  \draw[decoration=arrows, decorate]
      (2)--(4)
      (2)--(1)
      (4)--(3)
      (2)--(5)

;

\end{tikzpicture}
\caption{General NDS}
\label{gliche}
\end{subfigure}
\begin{subfigure}[b]{0.4\textwidth}
\setlength{\unitlength}{4.5cm}\begin{tikzpicture}
  % draw all the nodes first
   \draw node[base] (1)  at (1,1)								{1}
 		 node[base] (2) at(2,2)                             {2}
 		  node[base] (3) at(4,-0) 							{3}
 		   node[base] (4) at(3,1) 							{4}
 		    node[base] (5) at(2,-0) 							{5}

  ;
  
  \draw[decoration=arrows, decorate]
      (2)--(4)
      (2)--(1)
      (4)--(3)
      (4)--(5)
	  (4)--(1)	
     
;

\end{tikzpicture}
\caption{Uniformly converging NDS}
\label{gsude}
\end{subfigure}
\end{center}
\end{figure}

An arrow in the graph represents the implication between the corresponding definitions and the missing arrow means that the implication is not true except for those following by transitivity.
 \end{theorem}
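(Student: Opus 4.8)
The plan is to check the ten arrows drawn in Figures~\ref{gliche} and~\ref{gsude} one by one, and then, for every ordered pair of definitions joined neither by an arrow nor by a chain of arrows, to exhibit an NDS witnessing that the implication fails. For a pair carrying no arrow in either picture a single \emph{uniformly converging} example suffices; only for the two pairs $4\!\to\!1$ and $4\!\to\!5$, which appear in Figure~\ref{gsude} but not in Figure~\ref{gliche}, must the counterexample be a genuinely non-convergent system.

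For the positive part over an arbitrary NDS: $\ref{def2}\Rightarrow\ref{def4}$ is a one-line induction, since $f_1^{n+r}(x)=f_1^n(x)$ at $n=0$ gives $f_1^r(x)=x$ and then $f_1^{(k+1)r}(x)=f_1^{kr+r}(x)=f_1^{kr}(x)$; and $\ref{def4}\Rightarrow\ref{def3}$ is immediate. For $\ref{def2}\Rightarrow\ref{def1}$ and $\ref{def2}\Rightarrow\ref{def5}$ one uses that Definition~\ref{def2} forces the whole trajectory to be periodic, so the auxiliary points may be taken to be $x,f_1^1(x),\dots,f_1^{r-1}(x)$, for which $f_1^{nr+i}(x)=f_1^i(x)$ and $f_{nr+i}\bigl(f_1^{i-1}(x)\bigr)=f_1^{nr+i}(x)$, making the defining inequalities hold with distance $0$. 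For the two extra arrows of Figure~\ref{gsude} I would invoke the standard fact that if $f_n\to f$ uniformly with $f$ continuous on the compact space $X$, then for each fixed $k$ the window compositions $f_{m+k}\circ\cdots\circ f_{m+1}$ converge uniformly to $f^k$ as $m\to\infty$ (induction on $k$ via uniform continuity of $f$). Given $f_1^{rn}(x)=x$ for all $n$, this lemma first yields $f^r(x)=x$, because $f_1^{rn}(x)=\bigl(f_{rn}\circ\cdots\circ f_{r(n-1)+1}\bigr)(x)\to f^r(x)$ while the left-hand side is constantly $x$; and then $f_1^{nr+i}(x)=\bigl(f_{nr+i}\circ\cdots\circ f_{nr+1}\bigr)(x)\to f^i(x)$ for each $0\le i<r$, which is exactly Definition~\ref{def5} with the cycle $x,f(x),\dots,f^{r-1}(x)$ and, read as $f_{nr+j}\bigl(f^{j-1}(x)\bigr)\to f^j(x)$, also Definition~\ref{def1}.

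For the non-implications I would keep a small stock of examples. The tent-map system of the Introduction ($f_1=\mathrm{id}$, $f_i=\tau$ for $i\ge2$, uniformly convergent to $\tau$), at a point $x$ with dense $\tau$-orbit no iterate of which equals $2/3$, satisfies Definition~\ref{def3} (indeed $f_1^1(x)=x$) but, having a dense trajectory and a non-periodic $\tau$-orbit, fails \ref{def1}, \ref{def2}, \ref{def4}, \ref{def5}; this removes all arrows out of~\ref{def3}. A companion uniformly converging system --- $f_i=\tau$ for $i\ge2$, but $f_1$ a surjection with $f_1(2/3)$ a point of dense $\tau$-orbit --- makes the fixed point $2/3$ pseudo-periodic of period $1$ (since $f_n(2/3)=2/3$ for $n\ge2$) yet gives it a dense trajectory; this is the construction referred to as ``part iii)'' and it removes all arrows out of~\ref{def1}. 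For the arrows out of~\ref{def5} I would take an \emph{autonomous} system $(X,f)$ with $f$ a continuous surjection having an attracting fixed point (e.g.\ $f(t)=\sqrt t$ on $[0,1]$, $x=\tfrac12$): the trajectory converges to the fixed point, so $x$ is asymptotically periodic, while it is neither periodic nor $r$-periodic, and in the autonomous case Definition~\ref{def1} collapses to ordinary periodicity, so \ref{def1} fails too. For $\ref{def4}\not\Rightarrow\ref{def2}$ inside the uniformly converging world I would use a system built around a $2$-cycle $\{p,q\}$ of $\tau$: $f_i=\tau$ for $i\ge2$ and $f_1$ a surjection sending $p$ to a $\tau$-preimage of $p$ distinct from $p$ and $q$; then $f_1^{2n}(p)=p$ for all $n$, but the trajectory $p,f_1(p),p,q,p,q,\dots$ is not a periodic sequence. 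Finally, for $\ref{def4}\not\Rightarrow\ref{def1}$ and $\ref{def4}\not\Rightarrow\ref{def5}$ over a general non-convergent NDS I would build a system on $[0,1]$ with $f_1^{2n}(x)=x$ for all $n$ in which the values $f_m(x)$, taken along \emph{every} arithmetic progression of indices $m$, fail to be eventually constant --- assigning $f_m(x)$ one of two fixed values $\ne x$ by a Thue--Morse-type pattern --- with $f_{2k+2}$ arranged to send the intermediate value back to $x$; then the trajectory is never eventually periodic (\ref{def5} fails) and for no period $r$ can $f_{rn+1}(x)$ converge as $n\to\infty$ (\ref{def1} fails).

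I expect the main difficulty to be exactly this last family, and more generally the bookkeeping that guarantees pseudo-periodicity in the sense of Definition~\ref{def1} fails for \emph{every} admissible period $r$ and \emph{every} candidate cycle, not only the obvious one: since Definition~\ref{def1} constrains the maps at finitely many fixed auxiliary points rather than along the trajectory, excluding it forces control of the sequences $\bigl(f_{rn+i}(x_i)\bigr)_n$ along all residue classes simultaneously, which is what the oscillating two-valued assignment is designed to achieve. The remaining points --- keeping every $f_n$ continuous and surjective, and checking that the ``kicked'' initial points above genuinely avoid the relevant countable sets of $\tau$-preimages --- are routine but need a little care; the positive half is comparatively soft once the window-composition lemma is at hand.
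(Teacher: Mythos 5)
Your plan is correct and, for the bulk of the diagram, coincides with the paper's proof: the positive implications are handled identically (the cycle $x,f_1^1(x),\dots,f_1^{r-1}(x)$ for $\ref{def2}\Rightarrow\ref{def1},\ref{def5}$, and the window-composition lemma for the two extra arrows under uniform convergence), and your counterexamples for the arrows out of Definitions~\ref{def3} and~\ref{def1} are exactly the paper's tent-map systems from parts ii) and iii). You diverge in three places, all legitimately. For the arrows out of Definition~\ref{def5} the paper again modifies the tent system (collapsing a transitive point onto the fixed point $2/3$), whereas your autonomous $f(t)=\sqrt t$ with an attracting fixed point is simpler and equally valid, since an autonomous system is in particular uniformly convergent and so serves both figures at once. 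For $\ref{def4}\not\Rightarrow\ref{def1},\ref{def2},\ref{def5}$ over general NDS the paper threads $0$ through a dense enumeration of the rationals ($f_{2m-1}(0)=q_m$, $f_{2m}(q_m)=0$), killing Definition~\ref{def5} via an infinite omega-limit set; your two-valued oscillation instead buys the failure of Definition~\ref{def5} from non-eventual-constancy and, more importantly, makes the non-convergence of $f_{nr+1}(x)$ along every arithmetic progression explicit, which is precisely what is needed to exclude Definition~\ref{def1} for every period $r$ (for the paper's example this step tacitly requires that no subsequence of the enumeration along an arithmetic progression converges, which depends on the enumeration chosen, so your version is arguably tighter; any block sequence with both symbols recurring in every progression would do in place of Thue--Morse). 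Finally, you supply an explicit uniformly convergent witness for $\ref{def4}\not\Rightarrow\ref{def2}$ (the kicked $2$-cycle of $\tau$); the paper's part b) only carries over items i)--iv) and adds the two new implications, leaving this case implicit, so your example genuinely completes that corner of the statement. The only outstanding work is the routine construction of the continuous surjections realizing your oscillating example, which you correctly identify and which poses no obstacle.
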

\begin{proof}
\begin{enumerate}
\item[a)] Case of general NDS 

\begin{enumerate}
\item[i)] From the definitions we immediately obtain the following implications:
		$2\implies 1$,  $2\implies 4$, $2\implies 5$ and $4\implies 3$.
\item[ii)] Now we show that Definition \ref{def3} does not imply any other definition.
	Fix $x_0\in X$ such that $x_0$ is a transitive point for $\tau$. Let $f_{1,\infty}$ be a nonautonomous system such that $f_1=id$ and $f_2=f_3=\dots=\tau$.Then $x_0$ is periodic in the sense of Definition \ref{def3} but not periodic in the sense of any other definition since its orbit is dense.
	\item[iii)] Consider the following nonautonomous system $f_{1,\infty}$.
	Let $y<2/3$ be a point with dense orbit under the tent map. Let $ f_1 $ be such that $f_1(0)=0,\ f_1(y)=y,\ f_1(5/6)=y,\ f_1(1)=1$ and it is linear between these points and let $f_n=\tau$ for $n>1$. Then the point $x_0=2/3$ is periodic in the sense of Definition \ref{def1} but not in the sense of any other definitions. 
    %\item[iv)] It is easy to see that Definiton \ref{def4} does not imply Definition \ref{def2}.
	%{\item[e)] Consider the following nonautonomous system $f_{1,\infty}$.
	%Let $$ f_1(x) = \left\{ \begin{array}{l@{\quad}c}
    %x+\frac{1}{2}, & x \in [0,1/2) \\
    %-2x+2, & x \in [1/2,1] \\
     %\end{array} \right. 
    %$$  and for $n>1$
   %$$ f_n(x) = \left\{ \begin{array}{l@{\quad}c}
    %\frac{x}{2}, & x \in [0,1/2) \\
    %\frac{3x-1}{2}, & x \in [1/2,1] \\
    %\end{array} \right. 
    %$$
    %Then the point $x_0=0$ is periodic in the sense of Definitions \ref{def5} and 6 but its not periodic in the sense of definitions 4, 2 and 3.
    \item[iv)] Consider nonautonomous system from iii), but with $y=2/3$. Then there exists a point $2/3<x_0<5/6$ with dense orbit under the tent map. Such a point $x_0$ is periodic in the sense of Definition \ref{def5} but not in the sense of any other definition.
    \item [v)] Let $q_1,q_2,q_3,\dots$ be an enumaration of rational numbers in $[0,1]$. Consider the following NDS $f_{1,\infty}$. Let $f_n$ be a surjective interval map for $n\in \mathbb{N}$ such that $f_{2m}(q_m)=0$ and $f_{2m-1}(0)=q_m$ for $m\in\mathbb{N}$. Then the point $x_0=0$ is periodic in the sense of Definition \ref{def4} but not in the sense of Definitions \ref{def1}, \ref{def2} and \ref{def5}.
    
    \end{enumerate} 
    \item[b)] Case of uniformly converging NDS
    
    Note that parts i)-iv) from the general case are also true for uniformly converging NDS. Moreover, from Definitions \ref{def1}, \ref{def4} and \ref{def5} together with uniform convergence we easily obtain that: $4\implies 5$ and $4\implies 1$ 
    \end{enumerate}
\end{proof}

Cánovas in \cite{2} proved for NDS $(X,f_{1,\infty})$ uniformly converging to $f$ that a point $x$ is pseudo-periodic (Definition \ref{def1}) if and only if $x$ is a periodic point for $f$. For asymptotically periodic points (Definition \ref{def5}) we have the following lemma.
\begin{lemma}\label{lemma}
Let $f_{1,\infty}$ be a nonautonomous system. Suppose that $f_n$ converges uniformly to $f$. If $x\in AP(f_{1,\infty})$ with the cycle $x_1,x_2,\dots,x_r$ then $x_1,x_2,\dots,x_r$ is a periodic cycle for $f$.
\end{lemma}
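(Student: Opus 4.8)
The plan is to prove the stronger, pointwise statement that $f(x_i)=x_{i+1}$ for every $i$, with indices read modulo $r$ (so $x_{r+1}:=x_1$); this is exactly what it means for $x_1,x_2,\dots,x_r$ to be a periodic cycle of $f$, since it gives $f^r(x_1)=x_1$.

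First I would unwind Definition \ref{def5}: for each fixed $i$ with $0\le i<r$, the hypothesis $x\in AP(f_{1,\infty})$ says precisely that the subsequence $\big(f_1^{nr+i}(x)\big)_{n}$ converges to $x_{i+1}$ as $n\to\infty$. In particular $f_1^{nr+i}(x)\to x_{i+1}$ and, applying the same with $i$ replaced by $i+1$, also $f_1^{nr+i+1}(x)\to x_{i+2}$; when $i+1=r$ the exponent $nr+r=(n+1)r$ and the limit is $x_1$, which matches the cyclic convention.

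The second and main step is to compute the limit of $f_1^{nr+i+1}(x)$ a different way, using the factorization $f_1^{nr+i+1}(x)=f_{nr+i+1}\big(f_1^{nr+i}(x)\big)$ and the triangle inequality:
\[
d\!\left(f_{nr+i+1}\big(f_1^{nr+i}(x)\big),\, f(x_{i+1})\right)\ \le\ \sup_{y\in X} d\big(f_{nr+i+1}(y),\,f(y)\big)\ +\ d\big(f(f_1^{nr+i}(x)),\,f(x_{i+1})\big).
\]
The first term on the right tends to $0$ by the uniform convergence $f_n\to f$, and the second tends to $0$ because $f$ is continuous and $f_1^{nr+i}(x)\to x_{i+1}$. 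Hence $f_1^{nr+i+1}(x)\to f(x_{i+1})$. Comparing this with the limit $x_{i+2}$ found in the first step and invoking uniqueness of limits in the metric space $X$ yields $f(x_{i+1})=x_{i+2}$ for every $i$, so $f$ cyclically permutes $x_1,\dots,x_r$ and the proof is complete.

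I do not expect a serious obstacle here; the only points requiring care are the cyclic bookkeeping of the indices at the wraparound $i=r-1$, and making explicit that uniform (not merely pointwise) convergence is what is being used — the term $d\big(f_{nr+i+1}(f_1^{nr+i}(x)),\,f(f_1^{nr+i}(x))\big)$ is controlled only because the argument $f_1^{nr+i}(x)$, though varying with $n$, is estimated by the sup-norm $\|f_{nr+i+1}-f\|_\infty$.
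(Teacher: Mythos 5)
Your proof is correct, and it runs on the same engine as the paper's: compute the limit of a tail of the trajectory in two different ways and invoke uniqueness of limits in a metric space. The difference is the step size. The paper works in blocks of length $r$: it chooses $N_1$ so that $d(f_{nr+1}^r(y),f^r(y))<\varepsilon/2$, uses uniform continuity of $f^r$ to pick a $\delta$, and concludes $f^r(x_1)=x_1$ (then ``analogously'' for the other $x_i$). You instead compose one map at a time, comparing $f_1^{nr+i+1}(x)=f_{nr+i+1}\bigl(f_1^{nr+i}(x)\bigr)$ with $f(x_{i+1})$. This buys two things. First, you use the hypothesis $f_n\to f$ uniformly exactly as given; the paper's block estimate implicitly relies on the auxiliary fact that $f_{nr+1}^r\to f^r$ uniformly, which is true (on a compact space, via uniform continuity of $f$) but is itself a small lemma, and you also need only continuity of the uniform limit $f$ at the points $x_{i+1}$ rather than uniform continuity of $f^r$. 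Second, your conclusion is sharper and matches the statement more literally: you get $f(x_i)=x_{i+1}$ cyclically, i.e.\ $f$ permutes the cycle, rather than only that each $x_i$ is a fixed point of $f^r$. The index bookkeeping at the wraparound $i=r-1$ is handled correctly, so there is no gap.
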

\begin{proof}
Let $\varepsilon>0$. Let $N_1$ be such that $d(f_{nr+1}^r(x),f^r(x))<\varepsilon/2$ for all $n\geq N_1$, $\delta$ be such that $d(x,y)<\delta \implies d(f^r(x),f^r(y))<\varepsilon/2$ and let $N_2$ be such that $d(f_1^{nr}(x),x_1)<\delta$ for all $n\geq N_2$. Set $N=\max\{N_1,N_2\}$.

Let $n\geq N$. We get 
\begin{align*}
 d(f_1^{(n+1)r}(x),f^r(x_1))&=d(f_{nr+1}^r(f_1^{nr}(x)),f^r(x_1))\leq \max_{y\in B_{\delta}(x_1)}d(f_{nr+1}^r(y),f^r(x_1))\\ &
\leq \max_{y\in B_{\delta}(x_1)}(d(f_{nr+1}^r(y),f^r(y))+d(f^r(y),f^r(x_1)))<\varepsilon
\end{align*}
We get that $\lim_{n\rightarrow\infty}f_{nr+1}^r(f_1^{nr}(x))=f^r(x_1)$, but $x\in AP(f_{1,\infty})$ and from the definition we can see that $\lim_{n\rightarrow\infty}f_{nr+1}^r(f_1^{nr}(x))=x_1$, therefore $f^r(x_1)=x_1$ and $x_1$ is periodic point for $f$. Analogously for $x_2, x_3, \dots,x_r$.
\end{proof}
\begin{remark}Note that this lemma is also true for Definitons \ref{def2}, \ref{def4} but is not true for Definition \ref{def3}.
\end{remark}
\begin{remark}
The converse implication is not true for Definitions \ref{def2}-\ref{def5}: there exists a point $x$ such that $x\in Per(f)$, but $x$ is not periodic in the sense of Definitions \ref{def2}-\ref{def5} (see example in the proof of Theorem \ref{sharkovsky}).
\end{remark}
\section{Transitivity and Devaney chaos}
Let us now discuss the notion of Devaney chaos for NDS. For autonomous systems generated by a continuous map $f$ of a metric space $X$ there are two commonly used, but in general nonequivalent, definitions of transitivity of $f$:
\begin{enumerate}
\item[(TT)] For every pair of nonempty open sets $U$ and $V$ in $X$, there is a positive integer $n$ such that $f^n(U)\cap V\neq \emptyset$.
\item[(DO)] There is a point $x_0$ in $X$ such that the orbit of $x$ is dense in $X$.
\end{enumerate}
However, if $X$ is compact and without isolated points then these two definitions are equivalent (see \cite{6}). But even for interval NDS we can easily get that (DO) does not imply (TT) (see, e.g., the example in the proof of Theorem \ref{relations}, part e)). In the following text we will use the following definition.
\begin{defi}
A nonautonomous system $(X,f_{1,\infty})$ is {\it topologically transitive} if for every pair of nonempty open sets U and V in X, there is a positive integer $n$ such that $f_1^n(U)\cap V\neq \emptyset$.
\end{defi}
We say that NDS $(X,f_{1,\infty})$ has $\textit{sensitive dependence on initial conditions}$ (is sensitive, for short) if there is $\delta>0$ such that for any $x\in X$ and $\varepsilon>0$ there is $y\in X$ with $d(x,y)<\varepsilon$ such that $d(f_1^n(x),f_1^n(y))>\delta$ for some $n\geq 0$.

Now, let us recall the Devaney's definition of chaos for NDS.
\begin{defi}
A NDS $(X,f_{1,\infty})$ is {\it Devaney chaotic} if it satisfies the following conditions:
\begin{enumerate}
\item[i)] $(X,f_{1,\infty})$ is topologically transitive,
\item[ii)]  $(X,f_{1,\infty})$ has a dense set of periodic points,
\item[iii)]  $(X,f_{1,\infty})$ is sensitive.
\end{enumerate}
\end{defi}

It is known (see \cite{1}) that in the case of ADS the first two conditions imply the third one and for {\it interval} ADS (see \cite{6}) it is true that transitivity implies dense set of periodic points and hence transitivity is equivalent to Devaney chaos. A natural question arises if the analogous properties are valid for NDS (with the appropriate definition of periodic points.)

It is easy to see that the analogous of this theorem are not valid if we consider periodicity in the sense of Definition \ref{def1} or Definition \ref{def3}. Zhu, Shi and Shao proved in \cite{11} that the first two conditions imply the third one if we consider periodicity in the sense of Definiton \ref{def2}. Miralles, Murillo-Arcila and Sanchis proved in \cite{3} analogous theorem for Definition \ref{def4}. 

The following theorem shows that this property remains to be fulfilled even for our new weaker definition of asymptotical periodicity. Note that in general  it is not true that in the case of \textit{interval} NDS transitivity implies existence of  dense set of periodic points in the sense of Definitions \ref{def2}-\ref{def5} (see example in the proof of Theorem \ref{relations}, part iii)).

\begin{theorem}\label{devaney}
Let $f_{1,\infty}$ be a nonautonomous system on a compact metric space X without isolated points. Suppose that $f_n$ converges uniformly to $f\in C(X)$. If
\begin{enumerate}
\item $f_{1,\infty}$ is topologically transitive and
\item $AP(f_{1,\infty})$ is dense 
\end{enumerate}
then $f_{1,\infty}$ is sensitive.
\end{theorem}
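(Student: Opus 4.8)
The plan is to adapt the Banks–Brooks–Cairns–Davis–Stacey argument (\cite{1}) that for a single map topological transitivity together with a dense set of periodic points forces sensitivity, using Lemma \ref{lemma} to replace ``periodic orbit'' by ``cycle of an asymptotically periodic point'' and the uniform convergence $f_n\to f$ to keep under control any trajectory that has come close to such a cycle. Two preliminary observations are needed. First, since $X$ is a compact metric space without isolated points, a standard Baire category argument shows that a topologically transitive NDS has a residual set of points with dense trajectory; consequently, for every pair of nonempty open sets $U,V$ and every $M\in\mathbb N$ there is $n\ge M$ with $f_1^n(U)\cap V\neq\emptyset$ — we shall need linking at \emph{arbitrarily large} times, not merely at some time. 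Second, by Lemma \ref{lemma} the cycle of every point of $AP(f_{1,\infty})$ is a periodic orbit of $f$; hence two such cycles are either equal or disjoint, so the family $\mathcal C$ of all cycles of points of $AP(f_{1,\infty})$ is a collection of mutually disjoint finite sets.

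The sensitivity constant is read off from the geometry of $\mathcal C$. If all members of $\mathcal C$ coincide with one periodic orbit $C$ of $f$, then, $X$ being infinite while $C$ is finite, there are $x_0\in X$ and $\gamma_0>0$ with $\operatorname{dist}(x_0,C)\ge 2\gamma_0$; otherwise fix two distinct (hence disjoint) $C_1,C_2\in\mathcal C$ with $4\eta=\operatorname{dist}(C_1,C_2)>0$. In either case set $\delta$ to a small fixed multiple of $\gamma_0$, resp. $\eta$. Now take $x\in X$ and $\varepsilon>0$. By density of $AP(f_{1,\infty})$ pick $p\in B(x,\varepsilon)\cap AP(f_{1,\infty})$ with cycle $C_p\in\mathcal C$; from Definition \ref{def5}, $\operatorname{dist}(f_1^m(p),C_p)\to 0$, so $\operatorname{dist}(f_1^m(p),C_p)<\delta$ for all $m\ge N$, for some $N$. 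If $d(f_1^m(x),f_1^m(p))>\delta$ for some $m$, then $p$ itself witnesses sensitivity at $x$. Otherwise $f_1^m(x)$ stays within $2\delta$ of $C_p$ for all $m\ge N$; then, using the first preliminary observation, we route a point $y\in B(x,\varepsilon)$, at some time $n\ge N$, into a neighbourhood of a region $W$ that is bounded away from the part of $X$ where $f_1^n(x)$ now lies, and the triangle inequality (through $f_1^n(x)$) gives $d(f_1^n(x),f_1^n(y))>\delta$, so $y$ witnesses sensitivity at $x$.

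The delicate point is the choice of this region $W$: we need it bounded away from $C_p$ by an amount independent of $x$, and a general periodic orbit $C_p$ of $f$ may be so ``spread out'' that no fixed region is far from all of it. This is where the uniform convergence $f_n\to f$ re-enters: once $f_1^m(p)$ is close to $C_p$ it becomes \emph{locked} onto the periodic pattern of $C_p$ (the nearest point of $C_p$ advances correctly under $f$, because the perturbation $\sup_y d(f_{m}(y),f(y))$ is eventually small compared with the spacing of the finite set $C_p$), so at each large time $m$ the point $f_1^m(x)$ lies close to one \emph{definite} point of $C_p$, and it suffices to push $y$'s trajectory away from that single point — for which compactness of $X$ supplies a uniform lower bound — while the dichotomy ``one common cycle versus two disjoint cycles'' provided by Lemma \ref{lemma} (and the benchmark cycles $C_1,C_2$, or $x_0$) is what makes this possible for every $x$. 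I expect precisely this step — producing, uniformly in $x$, a region to which transitivity can send a point near $x$ so as to separate it both from $x$'s trajectory and from the only asymptotically (hence not exactly) returning trajectory of the nearby point $p$, and coordinating the routing time $n$ so that $f_1^n(x)$ has already settled near $C_p$ — to be the main obstacle, and the place where the three hypotheses (transitivity, density of $AP$, and uniform convergence) genuinely interact.
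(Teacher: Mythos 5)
Your outline follows the same architecture as the paper's proof: the dichotomy between ``all asymptotically periodic points share one cycle'' and ``there exist two disjoint cycles,'' a benchmark point $x_0$ (resp.\ a pair of benchmark periodic orbits of $f$, justified by Lemma \ref{lemma}), routing a point of $B(x,\varepsilon)$ via transitivity at an arbitrarily large time, and a final triangle inequality. The first case is essentially complete and coincides with the paper's. But in the second case you explicitly stop at what you call ``the main obstacle'' --- producing, uniformly in $x$, a target region separated from the relevant cycle point and coordinating the hitting time with the phase of the cycle --- and this is precisely where the content of the theorem lies, so the proposal as written has a genuine gap rather than being a complete proof by a different route.

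The two ingredients you are missing are concrete. First, the target region is not chosen relative to $C_p$ at all: fixing two disjoint benchmark orbits $\mathrm{orb}(a)$, $\mathrm{orb}(b)$ at mutual distance $10\delta$, the triangle inequality forces \emph{every} cycle point $q_{i+1}$ of the nearby point $q$ to lie at distance at least $5\delta$ from one of these two orbits \emph{in its entirety}; the region is then a small neighbourhood $A$ of a single point $a$ of that orbit, chosen by continuity of $f$ so that $f^l(A)$ stays $\delta$-close to $f^l(a)$ for $l=0,\dots,r$. Second, the phase problem (transitivity hits $A$ at some time $k=(j-1)r+M+i$ with uncontrolled residue $M$) is resolved by \emph{propagating forward} $r-M$ steps: uniform convergence gives $d\bigl(f^{\,r-M}_{k+1}(y),f^{\,r-M}(y)\bigr)<\delta$ on $A$ for large $j$, so at the synchronized time $jr+i$ the routed point sits within $2\delta$ of $f^{\,r-M}(a)\in\mathrm{orb}(a)$, hence at distance greater than $5\delta-2\delta$ from $q_{i+1}$, while $f_1^{jr+i}(q)$ is within $\delta$ of $q_{i+1}$. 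Your phrase that the trajectory becomes ``locked onto the periodic pattern of $C_p$'' gestures at this but points at the wrong orbit ($C_p$ may indeed be too spread out, as you note); the paper sidesteps $C_p$ entirely and locks the \emph{routed} point onto a benchmark orbit of $f$ instead. Without these two steps the argument does not close, and supplying them is not a routine adaptation of Banks et al.\ but the actual use of the uniform-convergence hypothesis.
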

\begin{proof}
First, assume that all asymptotically periodic points have the same cycle $x_1,x_2,\dots,x_r$. Then there is a point $x_0\in X$ and $\gamma>0$ such that $d(x_0,x_i)>\gamma$ for $i=1,2,\dots,r$. We claim that $f_{1,\infty}$ is sensitive with the constant $\delta:=\gamma/4$.

Let $x\in X$ and $\varepsilon>0$. Then there is $q\in AP(f_{1,\infty})$ and $N\in \mathbb{N}$ such that $d(x,q)<\varepsilon$ and $d(f_1^{nr+i}(q),x_{i+1})<\delta$ for any $n\geq N$. Next, from the fact that $f_{1,\infty}$ is topologically transitive, there exist infinitely many positive integers $l$ such that $f_1^l(B_{\varepsilon}(x)\cap B_{\delta}(x_0))\neq \emptyset$ (\cite{3}), thus we can take $k=jr+i$ such that $j>N$ and $f_1^k(B_{\varepsilon}(x)\cap B_{\delta}(x_0))\neq \emptyset$. Then there exists $z\in B_{\varepsilon}(x)$ such that $f_1^k(z)\in B_{\delta}(x_0)$. Using the triangle inequality we obtain:
$$
d(f_1^{jr+i}(z),f_1^{jr+i}(q))\geq d(x_0,x_{i+1})-d(f_1^{jr+i}(z),x_0)-d(f_1^{jr+i}(q),x_{i+1})
$$
$$>4\delta -\delta -\delta$$
Hence, either $d(f_1^{jr+i}(x),f_1^{jr+i}(q))>\delta$ or $(f_1^{jr+i}(x),f_1^{jr+i}(z))>\delta$ which completes the proof.

Now, assume that there are at least two points $x,y\in AP(f_{1,\infty})$ which have disjoint cycles. Let $a,b$ be two arbitrary periodic points for $f$ with disjoint orbits. Let $\gamma$ be the distance between their orbits i.e., $$\gamma:=\min_{m,n\in \mathbb{N}}(f^n(a),f^m(b))$$
 We claim that $f_{1,\infty}$ is sensitive with the constant $\delta:=\gamma/10$.

Let $x\in X$ and $\varepsilon>0$. Since $AP(f_{1,\infty})$ is dense we can find asymptotically periodic point $q$ with the cycle $q_1,q_2,\dots,q_r$ such that $d(x,q)<\varepsilon$. Then there is $i\in \{0,1,\dots,r-1\}$ such that $\forall n\in\mathbb{N}$ $$d(f^n(a),q_{i+1})>5\delta \quad or\quad d(f^n(b),q_{i+1})>5\delta$$
Without loss of generality we can assume that the first case is true. Since $q\in AP(f_{1,\infty})$ there is $j_0\in \mathbb{N}$ such that $$d(f_1^{jr+i}(q),q_{i+1})<\delta\quad \forall j\geq j_0.$$
Since $f$ is continuous on $X$ there is a neighborhood $A$ of the point $a$ such that 
\begin{equation}\label{Devay1}
d(f^l(a),f^l(y))<\delta\quad\text{for}\quad y\in A\quad \text{and}\quad l=0,1,\dots,r.
\end{equation}
Moreover we assume that $f_n$ converges to $f$ (also $f_n^k$ converges to $f^k$) therefore there is $j_1\in \mathbb{N}$ such that $\forall j\geq j_1$, $\forall y\in A$ and $0\leq M<r$ we get
\begin{equation}\label{Devay2}
d(f^{r-M}_{(j-1)r+M+i+1}(y),f^{r-M}(y))<\delta\quad .
\end{equation}
Next, we can take $k=(j-1)r+M+i$, for $0\leq M<r$ such that  $j\geq \max\{j_0,j_1\}$ and $f_1^k(B_\varepsilon(x)\cap A)\neq \emptyset$. Then there is $z\in B_\varepsilon(x)$ such that $f_1^k(z)\in A$.
Using the triangle inequality and inequalities (\ref{Devay1}) and (\ref{Devay2}) we obtain :

$$d(f^{r-M}(a),f^{r-M}_{(j-1)r+M+i+1}(f_1^k(z)))\leq d(f^{r-M}(a),f^{r-M}(f^k_1(z)))+$$

$$d(f^{r-M}(f^k_1(z)),f^{r-M}_{(j-1)r+M+i+1}(f^k_1(z)))<2\delta$$
and then
$$d(f_1^{jr+i}(q),f_1^{jr+i}(z))\geq d(q_{i+1},f^{r-M}(a))-d(q_{i+1},f_1^{jr+i}(q))-d(f_1^{jr+i}(z),f^{r-M}(a))>$$
$$> 5\delta-\delta-2\delta>2\delta.$$
Hence, either $d(f_1^{jr+i}(x),f_1^{jr+i}(q))>\delta$ or $d(f_1^{jr+i}(x),f_1^{jr+i}(z))>\delta$ which completes the proof.
\end{proof}
%\begin{remark}
%If we consider dense set of periodic points in the sense of definition 1 then similar theorem is not true (example in the proof of theorem \ref{relations} section c)).
%\end{remark}

\section{Sharkovsky’s Theorem}
One of the most famous and important theorems of the theory of discrete dynamical systems on the interval which concerns periodic points is the following Sharkovsky’s theorem:

Order the natural numbers as follows:
\begin{align*}
3\succ 5 \succ 7\succ 9\succ 11\succ\dots\succ 2\cdot3\succ 2\cdot 5\succ 2\cdot 7 \succ\dots\succ\\
2^n\cdot 3\succ 2^n\cdot 5\succ 2^n\cdot 7\succ\dots\succ 2^n\succ\dots\succ 2^2\succ 2\succ 1
\end{align*}
\begin{theorem}[Sharkovsky’s Theorem]
If $f\in C(I)$ has a periodic point with period l and $l\succ m$, then f has a periodic point with period m. 
\end{theorem}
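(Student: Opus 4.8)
The plan is to prove Sharkovsky's theorem by the classical combinatorial argument based on the covering relation between subintervals of $I$ and the directed graph attached to a periodic orbit.

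\emph{Step 1: the covering lemma.} For closed subintervals $J,K\subseteq I$ write $J\Rightarrow K$ if $f(J)\supseteq K$; the intermediate value theorem then gives a closed $J'\subseteq J$ with $f(J')=K$. Iterating this, every cyclic chain $J_0\Rightarrow J_1\Rightarrow\cdots\Rightarrow J_{m-1}\Rightarrow J_0$ produces a point $x\in J_0$ with $f^i(x)\in J_i$ for $0\le i<m$ and $f^m(x)=x$; in particular $f$ has a fixed point whenever $f(J)\supseteq J$ for some $J$. I would also prove the refinement that if the chain is not the $d$-fold concatenation of a shorter cyclic chain for any proper divisor $d\mid m$, and $x$ is not forced to coincide with a point shared by two of the $J_i$, then the period of $x$ equals exactly $m$.

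\emph{Step 2: Markov graphs and the two cases.} To a periodic orbit $P=\{p_1<\cdots<p_n\}$ of $f$ attach the directed graph on the basic intervals $I_j=[p_j,p_{j+1}]$ with an edge $I_j\to I_k$ exactly when $I_j\Rightarrow I_k$. Since $\bigcup_j f(I_j)\supseteq\bigcup_j I_j$, taking $j^\ast$ to be the largest index with $f(p_{j^\ast})>p_{j^\ast}$ yields $f(I_{j^\ast})\supseteq I_{j^\ast}$, so $f$ always has a fixed point. The argument now divides according to whether $f$ has a period greater than $1$ that is not a power of $2$. Writing $S(l)=\{m:l\succeq m\}$ for the Sharkovsky tail of $l$, the goal is to show $Per(f)$ is downward closed, i.e. $l\in Per(f)\Rightarrow S(l)\subseteq Per(f)$.

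\emph{Step 3: the odd case (Štefan's lemma), the heart of the proof.} Assume $f$ has a period $>1$ which is not a power of $2$. First one shows, by a loop-hunting analysis of the Markov graph of such an orbit, that then $f$ has a periodic point of odd period $>1$; let $n\ge 3$ be the least such. Minimality forces the period-$n$ orbit $P$ into the \emph{Štefan structure}: there is a fixed point $z\notin P$, and after relabelling the points of $P$ according to how $f$ moves them across $z$, the Markov graph contains intervals $A_1,\dots,A_{n-1}$ with a self-loop $A_1\to A_1$, a simple cycle $A_1\to A_2\to\cdots\to A_{n-1}\to A_1$, and extra edges $A_{n-1}\to A_j$ for the appropriate indices $j$. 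From this graph one reads off loops of every relevant length: the self-loop (length $1$); for each $k\ge n$, the simple cycle preceded by $k-n+1$ self-loops at $A_1$ (length $k$); and, using the extra edges, a primitive loop of every even length $<n$. Together these have exactly the lengths filling out $S(n)=\{1\}\cup\{\text{all even numbers}\}\cup\{\text{odd numbers}\ge n\}$, and Step 1 converts each such loop into a periodic point of precisely that period, so $Per(f)\supseteq S(n)$. Since $n$ is the minimal odd period $>1$, every element of $Per(f)$ — being $1$, or even, or an odd number $\ge n$ — is $\preceq n$, hence $S(l)\subseteq S(n)\subseteq Per(f)$ for every $l\in Per(f)$, which finishes this case.

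\emph{Step 4: the dyadic case, and the main obstacle.} If every period of $f$ is a power of $2$, then $Per(f)=\{2^j:0\le j\le k\}$ for some $k\in\{0,1,2,\dots\}\cup\{\infty\}$, a set already downward closed for the Sharkovsky order; it only remains to show $2^k\in Per(f)\Rightarrow 2^j\in Per(f)$ for all $j<k$. This follows by induction on $k$: a period-$2^k$ point of $f$ is a period-$2^{k-1}$ point of $f^2$, so the inductive hypothesis applied to the interval map $f^2$, together with the elementary identity $Per(f^2)=\{n\ \text{odd}:n\in Per(f)\}\cup\{n:2n\in Per(f)\}$, gives $2^2,\dots,2^k\in Per(f)$, while $1,2\in Per(f)$ follows from the covering lemma (a period-$2$ point forces a fixed point, and a period $>2$ forces a period-$2$ point). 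The main obstacle is Step 3: first establishing that a minimal odd orbit genuinely has the Štefan structure, and then checking, across all the required lengths, that the loops extracted from the Štefan graph are non-degenerate and not proper repetitions, so that the points they produce have the exact period claimed rather than a proper divisor of it. The bookkeeping in Steps 2 and 4 is routine by comparison.
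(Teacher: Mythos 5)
First, note that the paper does not prove this statement at all: Sharkovsky's Theorem is quoted as classical background (the paper only uses it to contrast with the nonautonomous setting), so there is no in-paper argument to compare against. Your outline is the standard Štefan-cycle/Markov-graph proof, and Steps~1, 2 and the Štefan analysis of a minimal odd orbit are exactly the right ingredients.

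However, there is a genuine gap in your case division. In Step~3 you assume only that $f$ has a period $>1$ that is not a power of $2$, and you claim this forces an odd period $>1$. That claim is false. By the realization (converse) part of Sharkovsky's theory, every tail $S(l)=\{m: l\succeq m\}$ occurs as the exact period set of some continuous interval map; taking $l=2\cdot 3$ gives a map whose period set is $S(6)$, which contains $6$ (not a power of $2$) but contains no odd number greater than $1$. No amount of loop-hunting in the Markov graph of such an orbit can produce an odd period for $f$ itself, because no such period exists. Consequently your two cases (``some period is not a power of $2$'' handled via an odd orbit of $f$, versus ``all periods are powers of $2$'') do not cover maps whose periods include numbers of the form $2^dq$ with $q$ odd $\geq 3$ and $d\geq 1$ but no odd numbers $>1$. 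The standard repair is the doubling reduction: if $x$ has $f$-period $2^dq$ with $q$ odd $\geq 3$, then $x$ has $f^{2^d}$-period $q$, so the Štefan analysis applies to $g=f^{2^d}$ and yields $Per(g)\supseteq S(q)$; one then transfers periods of $g$ back to periods of $f$ by iterating the identity $Per(f^2)=\{n\ \text{odd}: n\in Per(f)\}\cup\{n: 2n\in Per(f)\}$ that you already state in Step~4. With that third case inserted (and with the acknowledged lemmas — the Štefan structure of a minimal odd orbit, the exact-period refinement of the covering lemma, and ``period $>2$ implies period $2$'' — actually carried out), the outline becomes a correct proof.
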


Cánovas in \cite{2} proved that the analogue of the Sharkovsky’s theorem is valid for uniformly converging NDS if we define periodic points using Definition \ref{def1}. 
The following theorem shows that similar theorem is no longer true if we consider periodic points in the sense of Definition \ref{def5}.
\begin{theorem}\label{sharkovsky}

There exists a nonautonomous system $(I,f_{1,\infty})$ which uniformly converges to $f(x):=1-x$ and  with asymptotic 2-cycle but without any asymptotic fixed point.
\end{theorem}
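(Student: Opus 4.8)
The plan is to exploit that the intended limit $f(x)=1-x$ is an involution, $f^{2}=\mathrm{id}$, so that along any nonautonomous orbit the maps cancel in pairs and the whole system reduces to iterating small perturbations of the identity. Concretely, I would fix $\delta_{k}\in(0,1)$ with $\delta_{k}\to0$ and $\sum_{k}\delta_{k}=\infty$ (e.g.\ $\delta_{k}=1/(k+1)$), put $\phi_{k}(x):=x+\delta_{k}\,x(1-x)$, and define the system by $f_{2k-1}:=f$, $f_{2k}:=\phi_{k}\circ f$. First I would record the easy facts: each $\phi_{k}$ is an orientation-preserving homeomorphism of $I$ fixing $0$ and $1$ with $\|\phi_{k}-\mathrm{id}\|_{\infty}=\delta_{k}/4$, so each $f_{n}$ is a surjective continuous self-map of $I$, and since $f_{2k-1}=f$ while $\|f_{2k}-f\|_{\infty}=\|\phi_{k}-\mathrm{id}\|_{\infty}\to0$, the sequence $f_{1,\infty}$ converges uniformly to $f$.

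Next I would compute the iterates. Using $f^{2}=\mathrm{id}$, an easy induction gives $f_{1}^{2m}=\phi_{m}\circ\cdots\circ\phi_{1}=:\Phi_{m}$ and $f_{1}^{2m+1}=f\circ\Phi_{m}$. Hence for every $x\in I$ the trajectory is $x,\,1-x,\,\Phi_{1}(x),\,1-\Phi_{1}(x),\,\Phi_{2}(x),\,1-\Phi_{2}(x),\dots$, that is $f_{1}^{2m}(x)=\Phi_{m}(x)$ and $f_{1}^{2m+1}(x)=1-\Phi_{m}(x)$, so everything is governed by the behaviour of $\Phi_{m}(x)$.

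The hard part — really the only non-routine point — is to show that $\Phi_{m}(x)\to1$ for every $x\in(0,1]$ (while $\Phi_{m}(0)=0$ trivially). For $x\in(0,1)$ the points $\Phi_{m}(x)$ stay in $(0,1)$ and increase, because $\phi_{k}(y)>y$ on $(0,1)$; so $\Phi_{m}(x)$ increases to some limit $L\le1$. If $L<1$, then from $\Phi_{m+1}(x)-\Phi_{m}(x)=\delta_{m+1}\,\Phi_{m}(x)(1-\Phi_{m}(x))$ together with $\Phi_{m}(x)(1-\Phi_{m}(x))\to L(1-L)>0$, the telescoping series $\sum_{m\ge1}\bigl(\Phi_{m+1}(x)-\Phi_{m}(x)\bigr)=L-\Phi_{1}(x)$ would converge even though its terms are, for large $m$, at least $\tfrac12 L(1-L)\,\delta_{m+1}$, contradicting $\sum_{k}\delta_{k}=\infty$. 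Hence $L=1$. This is exactly where the divergence of $\sum\delta_{k}$ is essential: for a summable sequence $\lim_{m}\Phi_{m}$ would be a homeomorphism of $I$, and then some point would be carried to $1/2$, producing an asymptotic fixed point.

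Finally I would assemble the conclusion. For every $x\in I$ one of the subsequences $(f_{1}^{2m}(x))_{m}$, $(f_{1}^{2m+1}(x))_{m}$ converges to $1$ and the other to $0$; hence $x$ is asymptotically periodic with the $2$-cycle $\{0,1\}$, so the system does have an asymptotic $2$-cycle (indeed $x=0$ already gives the exact $2$-cycle $0\mapsto1\mapsto0$, since $0$ and $1$ are not fixed by $f$). On the other hand the full trajectory of any $x$ does not converge, its even and odd iterates tending to the distinct points $0$ and $1$; therefore no point satisfies Definition \ref{def5} with $r=1$, i.e.\ there is no asymptotic fixed point. (Equivalently: an asymptotic fixed point would, by Lemma \ref{lemma}, converge to the unique fixed point $1/2$ of $f$, which never happens.) This yields the desired nonautonomous system.
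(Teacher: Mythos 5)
Your construction is correct, and it reaches the theorem by a genuinely different route from the paper. The paper builds explicit piecewise-linear maps $g_n$ and $F_n$ satisfying the recursion $g_n\circ F_n=F_{n+1}$, so that $f_1^n=F_n$ is known in closed form; uniform convergence to $1-x$ is then checked by computing the maxima of $\abs{g_n(x)-(1-x)}$ at the breakpoints, and the asymptotic $2$-cycle $\{0,1\}$ is read off from the explicit formulas $F_n(x)=x/(n+1)$ (odd $n$) and $F_n(x)=1-x/(n+1)$ (even $n$). You instead exploit the involution $f^2=\mathrm{id}$ to make the maps cancel in pairs, reducing the even-time dynamics to the one-sided composition $\Phi_m=\phi_m\circ\cdots\circ\phi_1$ of near-identity increasing homeomorphisms, and you prove $\Phi_m(x)\to1$ on $(0,1]$ by a soft monotonicity-plus-divergence argument using $\sum_k\delta_k=\infty$; all your supporting computations (surjectivity, $\|f_{2k}-f\|_\infty=\delta_k/4$, the induction $f_1^{2m}=\Phi_m$, $f_1^{2m+1}=f\circ\Phi_m$, and the telescoping contradiction when $L<1$) check out, and the conclusion that every point is asymptotically $2$-periodic with cycle $\{0,1\}$ while no trajectory converges is exactly the paper's. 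What your approach buys is conceptual flexibility: the verification of uniform convergence decouples completely from the dynamical claim, any non-summable perturbation sequence works, and your closing remark correctly isolates the divergence of $\sum\delta_k$ as the essential mechanism (a summable perturbation would leave an asymptotic fixed point). What the paper's approach buys is full explicitness, including the exact convergence rate $1/(n+1)$ of the trajectories to the cycle. One cosmetic point: your aside that for summable $\delta_k$ the limit $\lim_m\Phi_m$ "would be a homeomorphism" needs only that it is a continuous monotone surjection fixing $0$ and $1$ (so the intermediate value theorem produces a preimage of $1/2$); injectivity is neither needed nor obvious, but since this is a motivational remark and not part of the proof, it does not affect correctness.
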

\begin{proof}

    Let $\{g_n\}_{n=1}^\infty$ be a sequence of interval maps such that (for $g_1$ nad $g_2$ see Figure \ref{gn}):
    \begin{itemize}
\item[] if $n$ is odd then 
$$g_n = \left\{ \begin{array}{l@{\quad}c}
    -\frac{n+1}{n+2}x+1, & x \in \left[0,\frac{1}{n+2}\right) \\
    -\frac{x}{(n+1)(n+2)}+\frac{n^2+2n+2}{(n+1)(n+2)}, & x \in \left[\frac{1}{n+2},\frac{2}{n+3}\right) \\
     -\frac{n+2}{n+1}(x-1), & x \in \left[\frac{2}{n+3},1\right] \\
     \end{array} \right. 
    $$  
    \item[] if $n$ is even then
  $$g_n = \left\{ \begin{array}{l@{\quad}c}
    -\frac{n+2}{n+1}x+1, & x \in  \left[0,\frac{n+1}{n+3}\right) \\
    -\frac{x}{(n+1)(n+2)}+\frac{1}{n+2}, & x \in  \left[\frac{n+1}{n+3},\frac{n+1}{n+2}\right] \\
     -\frac{n+1}{n+2}(x-1), & x \in  \left[\frac{n+1}{n+2},1\right] \\
     \end{array} \right. 
    $$  
    \end{itemize}
    
\begin{figure}[h!]
\begin{center}
\begin{subfigure}[b]{0.4\textwidth}
\setlength{\unitlength}{4.5cm}\begin{picture}(1,1,1)(0,-0,1)
\put(0,0){\line(1,0){1}}
\put(0,0){\line(0,1){1}}
\put(1,0){\line(0,1){1}}
\put(0,1){\line(1,0){1}}
\put(0,0){\raisebox{-13pt}{\makebox[0pt][c]{$0$}}}
\put(1,0){\raisebox{-13pt}{\makebox[0pt][c]{$1$}}}
\put(0,1){\raisebox{-3pt}{\makebox[-3pt][r]{$1$}}}
\put(0.5,0){\raisebox{-13pt}{\makebox[-3pt][c]{$\frac{1}{2}$}}}
\put(0.33333,0){\raisebox{-13pt}{\makebox[3pt][c]{$\frac{1}{3}$}}}
\multiput(0,1)(0.1,-0.1){10}
{\line(1,-1){0.08}}
\multiput(0.33333,0)(0,0.11){7}
{\line(0,1){0.09}}
\multiput(0.5,0)(0,0.107){7}
{\line(0,1){0.09}}
\thicklines
\qbezier(0,1)(0.33333,0.77777)(0.333,0.77777)
\qbezier(0.333,0.77777)(0.5,0.75)(0.5,0.75)
\qbezier  (0.5,0.75)(1,0)(1,0)
\end{picture}
\caption{The graph of $g_1$}
\label{gliche}
\end{subfigure}
\begin{subfigure}[b]{0.4\textwidth}
\setlength{\unitlength}{4.5cm}\begin{picture}(1,1,1)(0,-0,1)
\put(0,0){\line(1,0){1}}
\put(0,0){\line(0,1){1}}
\put(1,0){\line(0,1){1}}
\put(0,1){\line(1,0){1}}
\put(0,0){\raisebox{-13pt}{\makebox[0pt][c]{$0$}}}
\put(1,0){\raisebox{-13pt}{\makebox[0pt][c]{$1$}}}
\put(0,1){\raisebox{-3pt}{\makebox[-3pt][r]{$1$}}}
\put(0.6,0){\raisebox{-13pt}{\makebox[0pt][c]{$\frac{3}{5}$}}}
\put(0.75,0){\raisebox{-13pt}{\makebox[0pt][c]{$\frac{3}{4}$}}}
\multiput(0,1)(0.1,-0.1){10}
{\line(1,-1){0.08}}
\multiput(0.6,0)(0,0.0666666){3}
{\line(0,1){0.045}}
\multiput(0.75,0)(0,0.0625){3}
{\line(0,1){0.045}}
\thicklines

\qbezier(0,1)(0.6,0.2)(0.6,0.2)
\qbezier(0.6,0.2)(0.75,0.1875)(0.75,0.1875)
\qbezier (0.75,0.1875)(1,0)(1,0)

\end{picture}
\caption{The graph of $g_2$}
\label{gsude}
\end{subfigure}
\end{center}
\caption{Graphs of $g_{1}$ and $g_2$ }
\label{gn}
\end{figure}
    Next, let $\{F_n\}_{n=1}^\infty$ be a sequence of interval maps such that (see Figure \ref{Fn}):

\begin{itemize}
\item[] if $n$ is odd then 
$$F_n = \left\{ \begin{array}{l@{\quad}c}
    \frac{x}{n+1}, & x \in \left[0,\frac{n+1}{n+2}\right) \\
    (n+1)x-n, & x \in \left[\frac{n+1}{n+2},1\right] \\
     \end{array} \right. 
    $$  
    \item[] if $n$ is even then
    $$F_n = \left\{ \begin{array}{l@{\quad}c}
    -\frac{x}{n+1}+1, & x \in \left[0,\frac{n+1}{n+2}\right) \\
    -(n+1)x+n+1, & x \in \left[\frac{n+1}{n+2},1\right] \\
     \end{array} \right. 
    $$  
    \end{itemize}
    \begin{figure}[h!]
\begin{center}
\begin{subfigure}[b]{0.4\textwidth}
\setlength{\unitlength}{4.5cm}\begin{picture}(1,1,1)(0,-0,1)
\put(0,0){\line(1,0){1}}
\put(0,0){\line(0,1){1}}
\put(1,0){\line(0,1){1}}
\put(0,1){\line(1,0){1}}
\put(0,0){\raisebox{-13pt}{\makebox[0pt][c]{$0$}}}
\put(1,0){\raisebox{-13pt}{\makebox[0pt][c]{$1$}}}
\put(0,1){\raisebox{-3pt}{\makebox[-3pt][r]{$1$}}}
\put(0.8,0){\raisebox{-13pt}{\makebox[0pt][c]{$\frac{n+1}{n+2}$}}}
\multiput(0.8,0)(0,0.07){3}
{\line(0,1){0.05}}
\multiput(0,1)(0.1,-0.1){10}
{\line(1,-1){0.08}}
\thicklines
\qbezier(0,0)(0.8,0.2)(0.8,0.2)
\qbezier(0.8,0.2)(1,1)(1,1)
\end{picture}
\caption{The graph of $F_{n}$ for odd n}
\label{fliche}
\end{subfigure}
\begin{subfigure}[b]{0.4\textwidth}
\setlength{\unitlength}{4.5cm}\begin{picture}(1,1,1)(0,-0,1)
\put(0,0){\line(1,0){1}}
\put(0,0){\line(0,1){1}}
\put(1,0){\line(0,1){1}}
\put(0,1){\line(1,0){1}}
\put(0,0){\raisebox{-13pt}{\makebox[0pt][c]{$0$}}}
\put(1,0){\raisebox{-13pt}{\makebox[0pt][c]{$1$}}}
\put(0,1){\raisebox{-3pt}{\makebox[-3pt][r]{$1$}}}
\put(0.75,0){\raisebox{-13pt}{\makebox[0pt][c]{$\frac{n+1}{n+2}$}}}
\multiput(0.75,0)(0,0.15){5}
{\line(0,1){0.1}}
\multiput(0,0)(0.1,0.1){10}
{\line(1,1){0.08}}

\thicklines

\qbezier(0,1)(0.75,0.75)(0.75,0.75)
\qbezier(0.75,0.75)(1,0)(1,0)

\end{picture}
\caption{The graph of $F_{n}$ for even n}
\label{Fsude}
\end{subfigure}
\end{center}
\caption{Graphs of $F_{n}$  }
\label{Fn}
\end{figure}

    One can verify that the following equality holds 
   \begin{equation}\label{skladani}   
 g_n(F_n(x))=F_{n+1}(x),\, \forall n\in\mathbb{N}.
     \end{equation} 
    We define $f_{1,\infty}$ in the following way : $f_1=F_1$ and $f_n=g_{n-1}$ for $n>1$.
    
    Now we need to verify that $f_n$ converges uniformly to $f(x):=1-x$. From the definitions of $g_n$ it is easy to see that, for odd $n$, the function $\abs{g_n(x)-(1-x)}$ has maximum either at the point $x_1=1/(n+2)$ or at the point $x_2=2/(n+3)$. Similarly, for even $n$, we have that the maximum is either at the point $x_3=(n+1)/(n+3)$ or at the point $x_4=(n+1)/(n+2)$. One can easily calculate that :
    \begin{itemize}
  \item[] $\abs{g_n(x_1)-(1-x_1)}=\frac{1}{(n+2)^2}$\ \ \
   and\ \ \ $\abs{g_n(x_2)-(1-x_2)}=\frac{1}{(n+3)}$
  \item[] $\abs{g_n(x_3)-(1-x_3)}=\frac{1}{(n+3)}$\ \ \
  and\ \ \ $\abs{g_n(x_4)-(1-x_4)}=\frac{1}{(n+2)^2}$
     \end{itemize}
    Hence for every $\varepsilon>0$ there is $n_0\in\mathbb{N}$ such that $\abs{g_n(x)-(1-x)}<\varepsilon$ and therefore $g_n$ uniformly converges to $1-x$.
    
    Now we show that every point $x\in [0,1]$ is asymptotically periodic with the 2-cycle $0$ and $1$.
    Fix $x\in [0,1)$ and $\varepsilon>0$. There are $n_0,n_1\in \mathbb{N}$ such that $x<(n_0+1)/(n_0+2)$ and $\varepsilon>1/(n_1+2)$. Put $N=\max(n_0,n_1)$, then for every odd $n\geq N$ we get
    $$\abs{f_1^n(x)-0}\overset{(\ref{skladani})}{=}\abs{F_n(x)}=\frac{x}{n+1}<\frac{\frac{n+1}{n+2}}{n+1}=\frac{1}{n+2}<\varepsilon$$
    and for even $n\geq N$ we get
    $$\abs{f_1^n(x)-1}\overset{(\ref{skladani})}{=}\abs{F_n(x)-1}=\abs{-\frac{x}{n+1}+1-1}=\frac{x}{n+1}<\frac{1}{n+2}<\varepsilon.$$
Hence every point $x\in [0,1]$ is asymptotically periodic with the cycle $0$ and $1$. Therefore there is no fixed point which completes the proof.
\end{proof}
\begin{remark}
 Note that this theorem is also true for Definitions 2-4 (the only periodic points are 0 and 1).
\end{remark}
\noindent\textbf{Acknowledgements}%\begin{acknowledgements}

\noindent The research was supported by grant SGS/18/2016 from the Silesian University in Opava. Support of this institution is gratefully acknowledged. The author thanks his supervisor Professor Marta \v{S}tef\'{a}nkov\'{a} for valuable suggestions and comments.

\newpage

\end{document}